\newtheorem{thm}{Theorem}[section]
\newtheorem{lem}[thm]{Lemma}
\newtheorem{defn}[thm]{Definition}
\theoremstyle{definition}
\theoremstyle{definition}
\def\X{\mathcal X}
\def\O{\mathcal O}
\def\A{\mathcal A}
\def\B{\mathcal B}
\def\D{\mathcal D}
\def\C{\mathscr C}
\def\Z{\mathbb Z}
\def\R{\mathbb R}
\def\V{\mathscr V}
\def\x{\widetilde x}
\def\y{\widetilde y}
\title{Subgroup Majorization}
\author{Andrew R Francis}
\address{Centre for Research in Mathematics, University of Western Sydney, Australia}
\author{Henry P Wynn}
\address{Centre for Analysis of Time Series, London School of Economics, UK}
\begin{document}

\begin{abstract}
The extension of majorization (also called the rearrangement ordering), to more general
groups than the symmetric (permutation) group, is referred to as $G$-majorization. There are
strong results in the case that $G$ is a reflection group and this paper builds on this theory in the direction of
subgroups, normal subgroups, quotient groups and extensions. The implications for fundamental cones and order-preserving functions are studied.
The main example considered is the hyperoctahedral group, which, acting on a vector in $\R^n$, permutes and changes the signs of components.

\medskip
\noindent
\emph{Mathematics Subject Classification (2010):} 15A39, 20E22, 20F55.

\noindent
\emph{Keywords:} 
Majorization, reflection group, group extension, hyperoctahedral group.
\end{abstract}

\maketitle

\section{Introduction}\label{sec:intro}
Majorization is now the general term for the study of inequalities which began with the theory
of rearrangements expounded at length by Hardy, Littlewood and Polya \cite{hardy1988inequalities} and given impetus by the
book of Marshall and Olkin~\cite{marshall1979inequalities}, now in its second expanded edition~\cite{marshall2010inequalities}. The group of permutations, the symmetric group
$S_n$,  is at the heart of this classical majorization, and a major advance was the extension
to generalised or $G$-majorization which applies particularly to general reflection groups (Eaton and Perlman~\cite{eaton1977reflection}). The present paper
is a contribution to $G$-majorization. Following  a short introduction, we investigate the implication of a number of group operations,
in particular the restriction to subgroups, quotients and extensions.

We begin with the basic definition.
\begin{defn}\label{conv}
Let $\X$ be an $n$-dimensional Euclidean space and let $G$ be a finite matrix  group operating on $\X$.
We define a partial ordering on $\X$, written $y \prec_G x$ by
$$y \in \mathrm{conv} (\O_G(x)).$$
\end{defn}
Here $\mbox{conv}$ is the convex hull and $\O_G(x)=\{gx:  g \in G\}$ is the orbit of $x$ in $\X$ under the action of $G$.
For classical majorization $G$ is the symmetric group $S_n$, and the action of $G$ permutes coordinates. That is, the action of $g\in G$ permutes the entries of $x$.

The following is a basic duality result for $G$-majorization \cite{eaton1984group,giovagnoli1985group}. We use $\langle \cdot, \cdot \rangle$ to denote the Euclidean
inner product and define
$$m(z,x) = \sup_{g \in G} \langle z, g(x)\rangle.$$
\begin{thm} Let $G$ be a closed subgroup of the orthogonal group $O_n$, acting on $\X$. Then
$y \prec_G x$ is equivalent to
$$m(z,y) \leq m(z,x),\;\; \mbox{for all}\;\; z \in \X.$$
\label{supcondition}
\end{thm}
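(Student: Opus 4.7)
The plan is to establish the two implications by essentially disjoint techniques: the forward direction $y \prec_G x \Rightarrow m(z,y) \le m(z,x)$ is a one-line convexity computation, while the converse will rest on a strict separating hyperplane argument.

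For the forward direction, I would write $y$ as a convex combination of orbit points, $y = \sum_i \lambda_i g_i x$ with $\lambda_i \geq 0$ and $\sum_i \lambda_i = 1$. For an arbitrary fixed $h \in G$, linearity of $h$ (as an orthogonal map) gives
$$\langle z, h y\rangle = \sum_i \lambda_i \langle z, (hg_i)x\rangle \le \sum_i \lambda_i\, m(z,x) = m(z,x),$$
because each $hg_i$ lies in $G$ so each inner product is dominated by $m(z,x)$. Taking the supremum over $h \in G$ yields $m(z,y) \le m(z,x)$.

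For the converse I would argue by contraposition: assume $y \notin \mathrm{conv}(\O_G(x))$ and produce a $z$ witnessing $m(z,y) > m(z,x)$. Since $G$ is a closed subgroup of the compact group $O_n$, it is compact; consequently the orbit $\O_G(x)$ is compact and its convex hull is a compact convex set. By the strict separating hyperplane theorem there exists $z \in \X$ with $\langle z, y\rangle > \sup_{w \in \mathrm{conv}(\O_G(x))} \langle z, w\rangle$. The right-hand supremum, achieved on an extreme point of the convex hull, equals $\sup_{g \in G} \langle z, gx\rangle = m(z,x)$. Taking $h = e$ in the definition of $m(z,y)$ gives $\langle z, y\rangle \le m(z,y)$, and stringing these together produces $m(z,y) > m(z,x)$, the desired contradiction.

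The only step requiring genuine care is the availability of \emph{strict} separation, which in turn rests on $\mathrm{conv}(\O_G(x))$ being closed. This is exactly where the hypothesis that $G$ is a closed subgroup of $O_n$ does real work: compactness of $G$ forces compactness of the orbit, without which one would only obtain non-strict separation and the sharp contradiction with $m(z,y) \le m(z,x)$ would break down. Once this point is secured, the proof is essentially a packaging of Hahn--Banach together with the obvious convex-combination estimate.
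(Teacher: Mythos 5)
Your proof is correct. Note that the paper itself gives no proof of this theorem --- it is quoted as a known duality result with citations to Eaton and to Giovagnoli--Wynn --- so there is nothing to compare against, but your argument is the standard one: the forward direction is the convexity computation you give, and the converse uses compactness of the closed subgroup $G\subseteq O_n$ to get compactness of $\mathrm{conv}(\O_G(x))$ (via Carath\'eodory in finite dimensions) and hence strict separation of $y$ from it. You correctly identify the closedness hypothesis as the ingredient that makes strict separation available, and the remaining steps (the supremum of a linear functional over a convex hull equals the supremum over the generating set, and $\langle z,y\rangle\le m(z,y)$ since $e\in G$) are all sound.
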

It will be convenient to slightly extend the convex hull definition in Definition~\ref{conv} as part of the discussion
on $G$-majorization as a cone ordering, below.

\subsection{Reflection groups}\label{sec:reflect}
The main results concerning extension of majorization are for the extension from the symmetric group, the classical majorization case, to reflection groups. The essence is contained in Theorem~\ref{bigthm}, below, the major credit for which should go to Eaton and Perlman  and Eaton~\cite{eaton1977reflection,eaton1982review,eaton1984group,eaton1987lectures,eaton1987group,eaton1988concentration,eaton1991concentration}. Giovagnoli and Wynn~\cite{giovagnoli1985group} made contributions working in the context of the extension of majorization to spaces of matrices. These studies realised the importance of the fundamental cone of the reflection groups.  An important question had remained as to whether the equivalent conditions of Theorem~\ref{bigthm} applied {\em only} for reflection groups and this was answered
in the affirmative by Steerneman~\cite{steerneman1990gmajorization} who revisited the theory with careful discussion of many equivalent conditions. Thus the machinery of $G$-majorization was established.

Finite reflection groups acting on Euclidean space are classified according to the finite Coxeter groups, defined by having a generating set $S$ with relations $s^2 = e$, the identity, for all $s \in S$, and $(s_is_j)^{m_{ij}}=e$ for $s_i,s_j\in S$ and with $m_{ij}$ integers $\ge 2$ (see~\cite{Hum90} for instance, for more details).
Any finite reflection group $G$ also has a representation as a subgroup of the orthogonal group $O_n$ acting on $\X=\R^n$, for $n$ sufficiently large. We shall fix $n$ and consider the class $\mathcal G$ of all reflection groups acting in this way on $\X$.

Any $G \in \mathcal G$ is defined by a finite set of distinct generating hyperplanes:
$$V_j = \{x: \langle x, a_j \rangle = 0\},$$
{for $j=1, \ldots, k$,} where the $a_j$ are the positive roots in the root system of $G$.  (Note that we will discuss root systems in a little more detail in Subsection~\ref{subsec: root}.)
These hyperplanes define half spaces
$$V_j^+ = \{x: \langle x, a_j \rangle \geq 0, \; j=1, \ldots, k\}$$
which in turn define the fundamental cone
$$\C_G = \bigcap_{i=1}^k V_j^+.$$
A fundamental {\em region} $R$ has the defining properties (i) $R$ is open,
(ii) for any $x \in R$ there is no other $x' = g(x)\in R$ for any $g \in G$ (equivalently, for $x\in R$ we have $R\cap\O_G(x)=\{x\}$), and
(iii) $\X = \bigcup_{g \in G}\;g (\bar{R}))$, where the bar means closure.
For a finite reflection group $G \in \mathcal G$ the interior of its fundamental cone $\C_G^o$
is a fundamental region.

The fundamental cone is \emph{essential} when $\bigcap_{i=1}^k V_i = {0}$, the origin.
In this case it can be shown that the fundamental region is simplicial, so that there
are exactly $k=n$ defining hyperplanes (see \cite{abramenko2008buildings} Proposition 1.36). The following portmanteau theorem, which applies to the case of an essential cone adapted from Steerneman~\cite{steerneman1990gmajorization}, is given without proof. Following the discussion in that paper the terms ``closed" in the statement of the theorem can be taken as ``essential".
\begin{thm}\label{bigthm}
Let $G$ be a subgroup of $O_n$.
The following are equivalent
\begin{enumerate}[(i)]
\item There is a convex cone $\C$ such that $m(x,y) = \langle x,y \rangle$ for all {$x,y \in \C$}.
\item There is a connected fundamental region unique up to translation under $G$.
\item $G \in \mathcal G$ is a finite reflection group with fundamental cone $\C_G$ and its interior
$\C_G^o$ is a fundamental region.
\item There is a closed convex cone $\C$ such that $y \prec_G x\iff m(z,y) \leq m(z,x)$ for all $z \in \C$.
\item There is a closed convex cone $\C$ such that $y \prec_G x$ is a cone ordering: $x,y \in \C \Rightarrow x-y \in \C^*$, the dual cone of $\C$.
\end{enumerate}
\end{thm}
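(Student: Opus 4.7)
The plan is to establish the five conditions as equivalent via a cycle of implications, with (iii)---the concrete structural characterization of $G$ as a finite reflection group with essential fundamental cone---serving as the natural anchor. I would prove (iii) $\Rightarrow$ (ii), (iii) $\Rightarrow$ (i) $\Rightarrow$ (iv) $\Rightarrow$ (v), and then close the loop with the genuine converse (v) $\Rightarrow$ (iii). The equivalence (ii) $\Leftrightarrow$ (iii) follows easily once (iii) is in hand, since $\C_G^o$ is a connected fundamental region for any $G \in \mathcal G$, and uniqueness up to $G$-translation is standard Coxeter-theoretic content already flagged in the text preceding the theorem.

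For (iii) $\Rightarrow$ (i), I would take $\C = \C_G$ and verify $m(y,x) = \langle y,x\rangle$ for $x,y \in \C_G$. The key computation is that for any simple reflection $s_i$,
\[
\langle y, s_i x\rangle = \langle y,x\rangle - \frac{2\langle x,a_i\rangle\langle y,a_i\rangle}{\langle a_i,a_i\rangle} \leq \langle y,x\rangle,
\]
since $\langle x,a_i\rangle,\langle y,a_i\rangle \ge 0$ on $\C_G$; an induction on the length of a reduced word for $g$ extends this to $\langle y,gx\rangle \le \langle y,x\rangle$ for all $g\in G$. The step (i) $\Rightarrow$ (iv) then comes from Theorem~\ref{supcondition} together with the $G$-invariance $m(gz,x)=m(z,x)$ (which follows from orthogonality of $G$): every $z \in \X$ has a $G$-image in $\overline{\C}$ once one verifies that $\C$ is forced to be as large as a fundamental region, so restricting the test vectors to $\C$ loses no information. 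For (iv) $\Rightarrow$ (v) I would first recover $m(z,x) = \langle z,x\rangle$ on $\C$ from (iv) by testing the equivalence on perturbations $y$ of $x$; this converts $m(z,y) \le m(z,x)$ into $\langle z, x-y\rangle \ge 0$, and the universal quantifier over $z \in \C$ translates exactly to $x-y \in \C^*$.

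The main obstacle is the closing arrow (v) $\Rightarrow$ (iii): one must show that if $G$-majorization coincides with a cone ordering on some closed convex cone $\C$, then $G$ must actually be a finite reflection group and $\C$ its fundamental cone. My approach would be to examine the extreme rays of $\C^*$ and the boundary hyperplanes of $\C$: for $x$ in the interior of $\C$ and any $g \in G$, combining $x - gx \in \C^*$ with the $g$-invariance of the ordering (applied to $g^{-1}$) constrains the images $gx$ to lie on a union of reflecting hyperplanes through the origin, and forces the generators of $G$ that carry $\C$ to an adjacent chamber to be reflections in the bounding hyperplanes of $\C$. This reflection-group recognition step is essentially the content of Steerneman's contribution and is where the bulk of the technical work sits; once it is in place, the identification $\C = \C_G$ follows by matching the bounding hyperplanes of $\C$ with the positive root hyperplanes $V_j$ of the resulting reflection group.
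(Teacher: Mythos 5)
The paper does not prove this theorem: it is stated explicitly ``without proof'' as a portmanteau result adapted from Steerneman~\cite{steerneman1990gmajorization}, with the underlying work credited to Eaton and Perlman. So there is no in-paper argument to compare against; I can only assess your outline on its own terms. Its architecture (anchoring at (iii), running (iii) $\Rightarrow$ (i) $\Rightarrow$ (iv) $\Rightarrow$ (v) and closing with the recognition theorem (v) $\Rightarrow$ (iii)) is the standard one in that literature and is the right shape.

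That said, there are concrete gaps. First, in (iii) $\Rightarrow$ (i) the induction ``on the length of a reduced word'' does not work by naively iterating the single-reflection inequality: for $g = s_{i_1}\cdots s_{i_k}$ the intermediate point $s_{i_2}\cdots s_{i_k}x$ generally leaves $\C_G$, so the hypothesis $\langle z, a_{i_1}\rangle \geq 0$ fails for it. The correct induction writes $g = s h$ with $\ell(h) = \ell(g)-1$, moves the reflection onto $y$ via $\langle y, s(hx)\rangle = \langle y, hx\rangle - c\,\langle y, a_s\rangle\langle h^{-1}a_s, x\rangle$, and uses the fact that $\ell(sg) < \ell(g)$ forces $h^{-1}a_s$ to be a \emph{positive} root, whence $\langle h^{-1}a_s, x\rangle \geq 0$. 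Without that root-system input the step is broken. Second, (v) $\Rightarrow$ (iii) is where essentially all of the content lives --- it is the theorem that cone orderings occur \emph{only} for reflection groups --- and your paragraph gestures at ``examining extreme rays'' without showing how $x - gx \in \C^*$ for interior $x$ actually produces reflections; this cannot be waved through, and the honest course is either to carry out Steerneman's argument or to cite it. Third, as stated condition (i) is satisfied by degenerate cones (e.g.\ a single ray, since $|\langle v, gv\rangle| \leq \|v\|^2$ with equality at $g=e$), so some essentiality or maximality hypothesis on $\C$ is needed for (i) $\Rightarrow$ (iv) to have any force; the paper acknowledges this looseness by instructing the reader to read ``closed'' as ``essential'', and your step ``once one verifies that $\C$ is forced to be as large as a fundamental region'' is precisely the point that needs an argument rather than an aside.
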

We shall find part (v) of considerable use. Without loss of generality we state an equivalent version to part (v), namely that
it should hold for representatives $\widetilde{x} = g_1(x),\widetilde{y} = g_2(y) \in \C_G$, for some $g_1,g_2 \in G$ and $\C_G$ the fundamental cone.

In what follows it will not be enough to use only essential cones because there will  be cases where the cone ordering condition is relevant but the cone is not essential. Let us consider a simple case. Suppose that $n=2$ and we are considering the simple group $\{e,g_1\}$ where $e$ is the identity and $g_1: (x_1,x_2) \mapsto (-x_1,x_2)$. The fundamental cone $\C$ is $\{x: x_1 \geq 0\}$, which is inessential. 
From the original Definition~\ref{conv}, $G$-majorization is equivalent to
$$|y_1| \leq |x_1|.$$
But the dual cone is the half-line $\{x_1 \geq 0, x_2=0\}$, and Theorem~\ref{bigthm}(v) breaks down.  We could overcome this difficulty by abandoning the convex hull definition of majorization in Definition~\ref{conv} and adopting the cone condition, without the necessity of the cone being closed (essential). We shall avoid this but it is useful to extend the definition of $G$-majorization and describe the essential and inessential parts of a fundamental cone.

\begin{defn}
Let $G$ be a finite reflection group generated by hyperplanes $\{V_j\}$ with fundamental cone $\C_G = \bigcap_{i=1}^k V_j^+$. Then the inessential part of $\C_G$ is $\C_{G,0} = \bigcap_{i=1}^k V_j$ and the essential is the orthogonal complement
$\C_{G,1} = \C_{G,0}^{\perp} \cap \C_G $.
\end{defn}

For example, in the group $G=\Z_2\times\Z_2$ acting on $\R^2$, generated by the $g_1$ defined above together with 
$g_2: (x_1,x_2) \rightarrow (x_1,-x_2)$, the situation reduces to standard majorization. There are two hyperplanes $V_1$ (the $x_2$ axis) and $V_2$ (the $x_1$ axis), and  $V_i^+$ is the positive half plane $x_i\ge 0$.  The fundamental cone $\C_G$ is the positive quadrant given by $x,_1,x_2\ge 0$; the inessential part is the origin $\C_{G,0}=\{(0,0)\}$; and the essential part is the $\C_{G,1} = \C_{G,0}^{\perp} \cap \C_G = \C_G\setminus\{(0,0)\}$.  In the less trivial case, where $G=\{e,g_1\}$, still acting on $\R^2$, we have just one hyperplane, $V_1$, and the fundamental cone $\C_G$ is given by $x_1\ge 0$ as described above.  The inessential part is the intersection of the hyperplanes (there is only one), namely $V_1$, and the essential part is $\C_{G,0}^{\perp} \cap \C_G = V_1^\perp\cap V_1^+=\{(x_1,0): x_1\ge 0\}$ (note, $V_1^\perp$ is the $x_1$ axis).

If we restrict $G$ and vectors $x,y$ to $\C_{G,0}^{\perp}$, then all the conditions of Theorem~\ref{bigthm} apply. The following extension
of $G$-majorization is based on this.
\begin{defn}
Let $G$ be a finite reflection group. We define essential $G$-majorization by $x \prec^+_G y$ if and only if
$y^+ \in \mbox{conv} (\O(x^+))$, where $x^+,y^+$ are the respective projections of $x,y$ into $\C_{G,0}^{\perp}$ and  $\O(x^+)=\{g(x^+):  g \in G\}$.
\end{defn}
Note that is is not necessary to redefine $G$, because $\mbox{conv} (\O(x^+)) \subseteq \C_{G,0}^{\perp}$, in any case.

It is possible to state the more general version of Theorem~\ref{bigthm}, dropping the requirement that the fundamental cone be closed
and replacing $y \prec_G x $ with $y \prec^+_G x$. In what follows we make the somewhat cavalier assertion that when we use $y \prec_G x $ we have the usual definition of majorization in the essential or $y \prec^+_G x$ in the inessential case.

We are now in a position to recapture matrix descriptions of $G$-majorization stated
simply in terms of inequalities. For this we shall use the cone ordering version Theorem~\ref{bigthm} (v), using a particular choice of the fundamental cone $\C_G$.
Let $\{a_j\} $ be the vectors orthogonally defining the hyperplanes $\{V_j\}$ and let $A = \{a_{ij}\}$ be the matrix whose rows are the  the $a_j$ for  $j=1,\ldots,k$ and (the closure of) the fundamental cone is given by the solution of $$Ax \geq 0.$$
For ease of explanation let us take the case when $k=n$
and $A$ is nonsingular. Then writing $Ax = \delta \geq 0$ we see that
$$x = A^{-1} \delta,$$
and the generators of $\C$ are the columns of $A^{-1}$. The generators of the dual $\C^*$
are the columns of $A$ so that the cone ordering statement $x-y \in \C^*$, for all $x,y \in \C$ becomes
$$x-y = A^T \epsilon,$$
for some $\epsilon \geq 0$. This, in turn is equivalent to
$$
(A^{-1})^T y \leq (A^{-1})^Tx,
$$
or
$$
c_i^T y \leq c_i^Tx
$$
for $x,y \in \C$, with the generators $c_i,\;i=1,\ldots,n$ of $\C^*$.

To summarise, for a general pair $x,y$  it is enough to give the cone ordering  representatives  $\x= g_1(x),\y=g_2(y) \in \C$, for some $g_1,g_2 \in G$, and we have $ y \prec_G x  \iff \x-\y \in \mathcal C^*$, and it is enough to use the generators of $\C^*$ to express this.

When $G$ is the symmetric group $S_n$ operating on $\R^n$, the fundamental cone can be taken as the region given by
$$
x_1 \geq x_2 \geq \cdots \geq x_n,
$$
which is not essential. We map any vector $x= (x_1, \ldots, x_n)^T$ to the reordered values (order statistics)
$\x = (x_{[1]}, \ldots, x_{[n]})^T $ with $x_{[1]} \geq  \cdots \geq x_{[n]}$.

Then
$$
A= \left(
       \begin{array}{rrrrr}
         1 & -1 & \ldots & 0 & 0  \\
         0 & 1 &  -1 & \dots & 0\\
 \vdots &  & & & \vdots\\
       0  &  & \ldots & 1 & -1
       \end{array}
     \right).
$$
Although $A$ is $(n-1) \times n$ we can find the generators of $\C$ by orthogonally completing $A$ to
$$
A_1= \left(
       \begin{array}{rrrrr}
         1 & -1 & \ldots & 0 & 0  \\
         0 & 1 &  -1 & \dots & 0\\
 \vdots &  & & & \vdots\\
       0  &  & \ldots & 1 & -1 \\
       1 & 1 & \ldots &1 & 1
\end{array}
     \right).
$$
Then
$$(A_1^T)^{-1}= \frac{1}{n}\left(
       \begin{array}{ccccc}
         n-1 & -1 & \ldots & -1 & -1  \\
         n-2 & n-2 &  -2 & \dots & -2\\
 \vdots &  & & & \vdots\\
       1  & 1 & \ldots & 1 & -(n-1) \\
       1 & 1 & \ldots &1 & 1
\end{array}
     \right).
$$
Inspecting the rows of $(A_1^T)^{-1}$ and setting $\sum y_i = \sum x_i$, we obtain classical majorization;  otherwise the last inequality is  $\sum y_i \leq \sum x_i$, which gives lower weak majorization.

\section{Example: the hyperoctahedral group}\label{sec:eg.Bn}
The Coxeter group of type $\B_n$, also known as the hyperoctahedral group, is the group of signed permutations of $n$ letters.  It can be represented by $n\times n$ signed permutation matrices, and  is isomorphic to the semidirect product $\Z_2^n\rtimes S_n$, where $S_n$ is the symmetric group on $n$ entries and $\mathbb Z_2^n$ can be interpreted as changing the sign of entries.  The group presentation can be represented by the  Dynkin diagram in Figure~\ref{fig:Bn.dynkin}.
The Dynkin diagram shows the generators $\{s_1,\dots,s_n\}$, and relations $(s_is_j)^{m_{ij}}=e$, where $m_{ij}=3$ if there is a single edge between $s_i$ and $s_j$ and $m_{ij}=4$ if there is a double edge.  The representation of this group as signed permutations has $s_i$ given by the 2-cycle $(i\ i+1)$ for $i=1,\dots,n-1$ and $s_n$ changing the sign of the $n$'th coordinate.  The last generator $s_n$ is often denoted $t$ in the literature on Coxeter groups (sometimes being the sign change on the first coordinate).  For more such information about finite reflection groups, see, for example, Humphreys~\cite{Hum90} or Kane~\cite{kane2001reflection}.
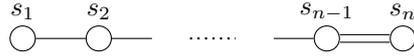
\begin{figure}[ht!]
   \begin{center}\footnotesize
   \begin{tikzpicture}
   \draw (0,0)node[draw,circle](s1){};   \draw(0,.35)node{$s_1$}; %
   \draw (1,0)node[draw,circle](s2){};   \draw(1,.35)node{$s_2$};
   \draw (4,0)node[draw,circle](sn-2){};   \draw(4,.35)node{$s_{n-1}$};
   \draw (5,0)node[draw,circle](sn-1){}; \draw(5,.35)node{$s_{n}$};
   \draw (s1)--(s2);
   \draw (s2)--(1.7,0);
	\draw (4,.05)node[circle](a1){};
	\draw (5,.05)node[circle](b1){};
	\draw (4,-.05)node[circle](a2){};
	\draw (5,-.05)node[circle](b2){};
	\draw (a1)--(b1);
	\draw (a2)--(b2);
   \draw (3.3,0)--(sn-2);
   \draw[thick,dotted] (2.2,0)--(2.8,0);
   \end{tikzpicture}
   \end{center}
\caption{Dynkin diagram for the Coxeter group of type $\B_n$.}
\label{fig:Bn.dynkin}
\end{figure}

We now work through the case  $n=3$.  The extension of the associated orders to $\B_n$ is routine and given in Section~\ref{Bn} below.
The Coxeter group $G$ of type $\B_3$ has Dynkin diagram as shown in Figure~\ref{fig:B3.dynkin}.

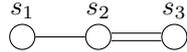
\begin{figure}[ht!]
   \begin{center}\footnotesize
   \begin{tikzpicture}
   \draw (0,0)node[draw,circle](s1){};   \draw(0,.35)node{$s_1$}; %
   \draw (1,0)node[draw,circle](s2){};   \draw(1,.35)node{$s_2$};
   \draw (2,0)node[draw,circle](s3){};   \draw(2,.35)node{$s_3$};
   \draw (s1)--(s2);
	\draw (1,.05)node[circle](a1){};
	\draw (2,.05)node[circle](b1){};
	\draw (1,-.05)node[circle](a2){};
	\draw (2,-.05)node[circle](b2){};
	\draw (a1)--(b1);
	\draw (a2)--(b2);
   \end{tikzpicture}
   \end{center}
\caption{Dynkin diagram for the Coxeter group of type $\B_3$.}
\label{fig:B3.dynkin}
\end{figure}

Its generators $\{s_1,s_2,s_3\}$ can be represented respectively by the following signed permutation matrices operating on $\mathbb R^3$:
$$M_1 = \left(
    \begin{array}{ccc}
      0 & 1 & 0 \\
      1 & 0 & 0 \\
      0 & 0 & 1 \\
    \end{array}
  \right),\;\;
  M_2 = \left(
    \begin{array}{ccc}
      1 & 0 & 0 \\
      0 & 0 & 1 \\
      0 & 1 & 0 \\
    \end{array}
  \right),\;\;
  M_3 = \left(
    \begin{array}{rrr}
      1 & 0 & 0 \\
      0 & 1 & 0 \\
      0 & 0 & -1 \\
    \end{array}
  \right).
$$

The fundamental cone consistent with the ordering in the Dynkin diagram is
$$\C_G = \{x= (x_1,x_2,x_3)^T: x_1 \geq x_2 \geq x_3 \geq 0\}.$$
The $2$-dimensional supporting hyperplanes of $\C_G$ are given by the equations
$$x_1-x_2=0,\;\;x_2-x_3 =0\;\;\text{and }\; x_3=0.$$
The fundamental cone is the region defined by the inequalities
$$x_1-x_2\geq 0,\;\;x_2-x_3  \geq 0\;\;\text{ and }\; x_3 \geq 0,$$
and the matrix $A$ and its inverse are given by
$$ A = \left(
    \begin{array}{rrr}
      1 & -1 & 0 \\
      0 & 1 & -1 \\
      0 & 0 & 1 \\
    \end{array}
  \right)
%$$
\qquad\text{and}\qquad
%$$
A^{-1} = \left(
    \begin{array}{rrr}
      1 & 0 & 0 \\
      1& 1 &  0 \\
      1 & 1 & 1 \\
    \end{array}
  \right).
$$
The representative $\x$ of $x\in\R^3$ in this cone is then found by arranging the coordinates in weakly decreasing order  according to their  absolute values.  We will denote the re-ordered coordinates
\[\x=(x_{[1]},x_{[2]},x_{[3]}),\]
so that $x_{[1]}$ is the coordinate with the largest absolute value, $x_{[2]}$ is the coordinate with the next largest absolute value and so on.  In other words,
$|x_{[1]}| \geq |x_{[2]}| \geq |x_{[3]}| \geq 0$.

We now have an induced order $y\prec x$ given by
\begin{align*}
|y_{[1]}| &\leq |x_{[1]}|\\
|y_{[1]}|+|y_{[2]}|&\le |x_{[1]}|+|x_{[2]}|\\
|y_{[1]}|+|y_{[2]}| +|y_{[3]}|&\leq |x_{[1]}|+|x_{[2]}|+ |x_{[3]}|,
\end{align*}
obtained from the columns of $A^{-1}$ as described in Section~\ref{sec:intro}. This is lower weak majorization
on the absolute values. 

\section{Subgroup and group extension constructions}\label{sec:quotients}

If $N$ is a normal subgroup of $G$ and $H$ is a subgroup of $G$ isomorphic to $G/N$ we say that $G$ is an extension of $N$ by $H$.  For general $N\lhd G$ it is not always the case that the quotient $G/N$ is isomorphic to a subgroup of $G$ (for example the quaternion group, its normal subgroup $\{\pm1\}$ and quotient $\Z_2\times\Z_2$), so the case of a group extension provides a special infrastructure for majorization.

We begin by describing how $G$-majorization can be restricted to a majorization by a subgroup $H$ of $G$.

Let $\{\X,G\}$ define a $G$-majorization and let $H$ be a subgroup of $G$ (not necessarily normal).  We define $ y \prec_H x$ formally as
$$y \in \mbox{conv}( \O_H(x)).$$
We have
\begin{equation}\label{eq:refinement}
y \prec_H x \implies y \prec_G x,
\end{equation}
because $H \leq G \implies \mbox{conv}( \O_H(x)) \subseteq \mbox{conv}( \O_G(x))$. We can say that $\prec_G$ is a {\em refinement}
of $\prec_H$.
We can give an instructive proof of Eq.~\eqref{eq:refinement} using the equivalent condition from Theorem~\ref{supcondition}.
Thus,
$$\sup_{g \in G} \langle z, g (y)\rangle = \sup_{g \in G} \langle z, gg'(y) \rangle $$
for any  fixed $g'\in G$. And similarly for $x$,
$$\sup_{g \in G} \langle z, g (x)\rangle = \sup_{g \in G} \langle z, gg''(x) \rangle $$
for any fixed $g'' \in G$.
Now let the right cosets of $H$ be $Hg_1, H g_2, \ldots$ . Then
 $$\sup_{g \in G} \langle z, gg'(y) \rangle = \sup_i \sup_{h \in H} \langle z, h g_i g'(y) \rangle,$$
 and suppose the $\sup_i$ is achieved at $i=r$. Then take $g' = g_r^{-1}$, and the last expression
 reduces to $\sup_{h \in H} \langle z, h (y) \rangle$. Carrying out  a similar procedure with $x$ and appealing
 to $y \prec_H x$ gives the result.

If $G$ is an extension of $N$ by $H$ then we can apply this same construction to produce a majorization by the quotient $G/N$.  In this case $H$ is isomorphic to $G/N$, but the majorization depends on the isomorphism.  A convenient way to approach this is to extend this isomorphism $H\cong G/N$ to a homomorphism $G\to G$ with kernel $N$.  This can always be done, as the following (textbook) Lemma shows:
\begin{lem}
Let $G$ be a group, $N\lhd G$ and $H\le G$.  If $\pi:G/N\to H$ is an isomorphism then $\pi$ extends to a homomorphism $\phi:G\to G$ with kernel $\ker\phi=N$.  Furthermore, $\mathrm{im}\,\phi=H\cong G/N$.
\end{lem}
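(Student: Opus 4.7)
The strategy is completely compositional: build $\phi$ as the composite of three canonical maps and verify the claimed properties by reading them off this factorization.

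First, I would write down the three maps at play: the quotient homomorphism $q: G \to G/N$ (which exists because $N \lhd G$), the given isomorphism $\pi: G/N \to H$, and the inclusion $\iota: H \hookrightarrow G$ (which is a homomorphism because $H \le G$). I would then \emph{define}
\[
\phi := \iota \circ \pi \circ q : G \to G.
\]
Since $\phi$ is a composition of group homomorphisms, it is itself a homomorphism, so no direct verification of the homomorphism property is required.

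Next I would check the three assertions in turn. For the kernel: $\phi(g) = e$ in $G$ if and only if $\iota(\pi(gN)) = e$, and since $\iota$ is injective and $\pi$ is an isomorphism (hence injective), this reduces to $gN = N$, i.e., $g \in N$; thus $\ker \phi = N$. For the image: $\mathrm{im}\,\phi = \iota(\pi(q(G))) = \iota(\pi(G/N)) = \iota(H) = H$, where surjectivity of $q$ and $\pi$ is used. The ``extension'' claim is then just the statement that $\phi(g) = \pi(gN)$ (identifying $H$ with its image under $\iota$) for every $g\in G$, which is immediate from the definition. The isomorphism $\mathrm{im}\,\phi \cong G/N$ at the end follows either from $\pi$ itself or from applying the first isomorphism theorem to $\phi$: $G/\ker\phi = G/N \cong \mathrm{im}\,\phi = H$.

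There is no real obstacle here; the only point that warrants a sentence of care is the interpretation of the word ``extends,'' since $\pi$ and $\phi$ have different domains. I would address this explicitly by noting that what is meant is the commutativity of the diagram $\phi = \iota \circ \pi \circ q$, equivalently that the restriction of $\phi$ descends through $q$ to exactly $\pi$ (viewed as a map into $G$ via $\iota$). Once this convention is stated, the proof is essentially two lines.
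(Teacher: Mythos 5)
Your proposal is correct and is essentially the paper's own argument: the paper also defines $\phi(g):=\pi(gN)$ (which is exactly your composite $\iota\circ\pi\circ q$) and verifies the kernel and image in the same way, via injectivity of $\pi$ and the first isomorphism theorem. The only difference is presentational — you make the factorization through the quotient map explicit, which slightly cleans up the meaning of ``extends,'' but the mathematics is identical.
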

\begin{proof}
For $g\in G$ define $\phi(g):=\pi(gN)$.  If $n\in N$ then $\phi(n)=\pi(N)=1$, since $\pi$ is a homomorphism ($N$ is the identity of $G/N$), and so $N\subseteq\ker\phi$.  Conversely if $g\in\ker\phi$ then $\phi(g)=\pi(gN)=1$, but $\pi$ is an isomorphism so this implies $gN=N$ and therefore $g\in N$, completing the proof of the main statement.  The claim that im$\,\phi\cong G/N$ is immediate from the first isomorphism theorem.
\end{proof}

This shows that if $G$ is an extension of $N\lhd G$ by $H\le G$ then there is a homomorphism $\phi:G\to G$ such that $\ker\phi=N$ and im$\,\phi=H$.  Different choices of the homomorphism $\phi$ may provide different subgroups $H=\,$im$\,\phi$, each isomorphic to $G/N$.  To define a majorization with respect to $G/N$, we therefore need to take into account the map $\phi$.
In the same way that we like to consider $G$ as a matrix group acting on $\X$, we can use a matrix representation $(G/N,\phi)$ of $G/N$ which depends on $\phi$. In this way there is a natural definition of majorization for $G/N$, depending on $(N,\phi)$:
$$ y \prec_{(G/N,\phi)} x\quad \Longrightarrow\quad   y \in \mbox{conv}\left(\O_{(G/N,\phi)}\right).$$

Just as it is natural to consider $G$ acting on $\X$ as a matrix group, we can consider $G$ as the product group
$$G/N \times N \cong G.$$
We repeat, whereas the representation for $N$ is simply induced by $G$,  that
for $G/N$, and consequently the majorization, depends on the particular $\phi$ chosen.

Let $\V=\bigcup V_i$ be the union of the set of reflecting hyperplanes $V_i$ defined by the finite reflection group $G$ acting on the space $\X$.
Let  $\C^\circ_G$ denote the fundamental region corresponding to $G$ (the interior of the fundamental cone $\C_G$), as defined above.  This is an open convex set with the property that each orbit of $x\in\X\setminus\V$ contains exactly one element $gx$ (for some $g\in G$) in $\C^\circ_G$.
The set of translates $\{g\C^\circ_G\mid g\in G\}$ of the fundamental region is pairwise disjoint, and its union is $\X\setminus\V$.

In the case that $G$, $N$ and $G/N$ are reflection groups, we have a very simple relationship between their fundamental cones:

\begin{thm}\label{fund} Suppose $G$ is an extension of $N$ by $H$, and that $G$, $N$ and $H\cong G/N$ are all reflection groups.
Then
\[\C^\circ_G=\C^\circ_N\cap\C^\circ_{H}.\]
\end{thm}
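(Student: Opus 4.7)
The plan is to prove both inclusions, choosing fundamental regions compatibly. Since $N,H\le G$ are reflection subgroups, $\V_N,\V_H\subseteq\V_G$, and so the connected set $\C^\circ_G$ lies in a unique $N$-fundamental region and a unique $H$-fundamental region; I take those to be $\C^\circ_N$ and $\C^\circ_H$, which immediately gives $\C^\circ_G\subseteq\C^\circ_N\cap\C^\circ_H$.

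For the reverse inclusion, the key observation is that $\C^\circ_N\cap\C^\circ_H$ is open and convex, hence connected, and already contains the whole connected component $\C^\circ_G$ of $\X\setminus\V_G$. It therefore suffices to show $\C^\circ_N\cap\C^\circ_H$ is disjoint from $\V_G$, for then the intersection lies in a single connected component of $\X\setminus\V_G$, which must be $\C^\circ_G$.

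To verify disjointness, I use $G\cong N\rtimes H$ (valid because $|H|=|G/N|$ and $G=HN$ force $H\cap N=\{e\}$), so every $g\in G$ factors uniquely as $g=nh$ with $n\in N$ and $h\in H$. A reflection with $n=e$ lies in $H$ and its hyperplane misses $\C^\circ_H$; with $h=e$ it lies in $N$ and its hyperplane misses $\C^\circ_N$. The mixed case $r=nh$ with $n,h\neq e$ is the crux: $r^2=e$ together with normality of $N$ gives $h^2=e$ and $hnh^{-1}=n^{-1}$, and any fixed point $x\in V_r\cap\C^\circ_N\cap\C^\circ_H$ satisfies $hx=n^{-1}x$, a nontrivial $N$-translate of $x$, which cannot lie in $\C^\circ_N$.

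The main obstacle is closing this mixed case: one needs the auxiliary fact that $H$ stabilises $\C^\circ_N$ setwise, so that $x\in\C^\circ_N$ forces $hx\in\C^\circ_N$ and contradicts $hx\notin\C^\circ_N$. This $H$-invariance should follow from normality (which makes $H$ permute the $N$-fundamental regions) together with the compatible choice in step one: a pigeonhole count matches the $|H|=|G|/|N|$ distinct $G$-fundamental regions sitting inside $\C^\circ_N$ with the $|H|$ translates $h\C^\circ_G$, pinning them all into $\C^\circ_N$ and forcing $h\C^\circ_N=\C^\circ_N$ for every $h\in H$. Once $H$-invariance is in hand an alternative way to finish is a clean volume argument: $\C^\circ_N\cap\C^\circ_H$ is then an $H$-fundamental region inside $\C^\circ_N$, so $|B\cap\C^\circ_N\cap\C^\circ_H|=|B|/(|N||H|)=|B|/|G|=|B\cap\C^\circ_G|$ for any $G$-symmetric ball $B$, which combined with the forward inclusion yields the equality with $\C^\circ_G$.
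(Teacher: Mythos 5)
Your strategy is genuinely different from the paper's and, in outline, cleaner: the paper argues via the claim that $\C_H$ equals the union of the $N$-translates of $\C_G$, whereas you reduce everything to showing that the open convex set $\C^\circ_N\cap\C^\circ_H$ misses every reflecting hyperplane of $G$, so that by connectedness it lies in the single $G$-chamber containing $\C^\circ_G$. The forward inclusion, the cases $r\in N$ and $r\in H$, and the reduction of the mixed case to the identity $hx=n^{-1}x$ are all correct.

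However, the step you yourself flag as the main obstacle --- that $H$ stabilises $\C^\circ_N$ setwise --- is a genuine gap, and the pigeonhole argument you sketch cannot close it. The $G$-chambers contained in $\C^\circ_N$ are exactly $\{k\C^\circ_G : k\in K\}$, where $K$ is the setwise stabiliser of $\C^\circ_N$ in $G$; this $K$ is indeed a complement to $N$ of order $|G|/|N|$, but nothing in the hypotheses forces $K=H$, so your count matches the chambers inside $\C^\circ_N$ with the translates $k\C^\circ_G$ for $k\in K$ rather than with $h\C^\circ_G$ for $h\in H$. In fact the needed invariance, and the theorem as literally stated, can fail: take $G$ of type $\B_2$, $N=\Z_2^2$ the sign changes, and $H=\{e,r\}$ with $r:(x_1,x_2)\mapsto(-x_2,-x_1)$ the reflection in $x_1+x_2=0$. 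All stated hypotheses hold ($N\lhd G$, $H\cong G/N\cong\Z_2$, $G=N\rtimes H$, all three are reflection groups), yet $r$ sends the positive quadrant $\C^\circ_N$ to the negative quadrant, the hyperplane of the mixed reflection $(x_1,x_2)\mapsto(x_2,x_1)$ cuts through $\C^\circ_N$, and $\C^\circ_N\cap\C^\circ_H=\{x_1>0,\,x_2>0\}\cap\{x_1+x_2>0\}$ is the whole positive quadrant, strictly larger than $\C^\circ_G=\{x_1>x_2>0\}$. The missing ingredient is therefore an extra compatibility hypothesis tying $H$ to the chamber geometry (for instance that the simple system of $G$ is the union of simple systems for $N$ and for $H$, equivalently that $H$ is the stabiliser of $\C^\circ_N$); the paper's own proof quietly assumes the same compatibility through its unproved identity $\C_H=\bigcup_{n\in N}n\C_G$, which also fails for the choice of $H$ above. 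With that hypothesis added, your argument goes through and is, to my mind, the better proof.
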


\begin{proof}
First note that $\C^\circ_G$ is entirely contained within $\C^\circ_N$, since actions under elements of $N$ are also actions of elements of $G$.

Consider the images of the fundamental region $\C^\circ_G$ under the action of elements of $N$.  Since $N\le G$ this action translates $\C^\circ_G$ into the $|N|$ disjoint translates of $\C^\circ_N$.  That is, for $n\in N$, $n\C^\circ_G\subseteq n\C^\circ_N$.

Now consider $g\in G\setminus N$, chosen so that $g\in gN\neq N$.  The action of $g$ on $\C^\circ_G$ must translate it to one of the $|N|$ regions $\{n\C^\circ_G\mid n\in N\}$, since the union of these regions is the whole of $\X\setminus\V$.  Then $\{g\C^\circ_G\mid g\in gN\}$ is a set of translates of $\C^\circ_G$, exactly one of which is in each of $\{n\C^\circ_N\mid n\in N\}$. For, suppose $gn_1\C^\circ_G$ and $gn_2\C^\circ_G$ are in the same $n\C^\circ_N$. Then $n_1\C^\circ_G$ and $n_2\C^\circ_G$ are in the same $n\C^\circ_N$ and therefore $n_1=n_2$.
That is, for each coset $gN$ and each $N$-translate $n\C_N$ there is a unique representative $g'\in gN$ with the property that $g'\C^\circ_G\subseteq n\C^\circ_N$.

Consider now the fundamental cone $\C_{H}$.  We claim that this is equal to the union of $N$-translates of $\C_G$, that is
\[
\C_{H}=\bigcup_{n\in N}n\C_G.
\]
This follows because every element of $G$ can be written uniquely as a product of an element of $H$ with an element of $N$, so that
\[\bigcup_{h\in H}h\bigcup_{n\in N}n\C_G=\bigcup_{g\in G}g\C_G=\X,\]
and because
\[h\C_G^\circ\cap\C_G^\circ=\varnothing\]
for any non-identity $h\in H$.

As a consequence, we have that
\[\C^\circ_N\cap\C^\circ_{H}=\C^\circ_N\cap\left(\bigcup_{n\in N}n\C_G\right)^\circ.\]
But as noted above, there is a unique $N$-translate of $\C_G^\circ$ inside $\C_N^\circ$, namely $\C_G^\circ$ itself, and for all other $n\neq e$ in $N$ we have $n\C_G\cap\C_N^\circ=\varnothing$.  Therefore
\[
\C^\circ_N\cap\left(\bigcup_{n\in N}n\C_G\right)^\circ
=\C_N^\circ\cap\C_G^\circ=\C_G^\circ\]
since $\C_G^\circ\subseteq\C_N^\circ$, as required.
\end{proof}

In \cite{niezgoda2001norm,niezgoda2002structure} the authors discuss a method of constructing
larger Eaton triples by considering union of cones associated with smaller Eaton triples. An
Eaton triple is an object which satisfies slightly weaker conditions than
in Theorem~\ref{bigthm}. They take the intersection of the cones
from the Eaton triples and the groups generated by the union of the groups from the Eaton triples.
Although our theorem above is restricted to reflection groups it is
otherwise quite general and reveals the importance of the normal subgroup
property. This property facilitates the study of general classes of
refections groups and subgroups.

The extensive study by  Maxwell~\cite{maxwell1998normal} shows that all normal subgroups of a finite reflection group are either of index 2 in the group, or are also finite reflection groups, so that the conditions of the Theorem \ref{fund} are very often satisfied.  Notable exceptions include the alternating subgroup $A_n$ as a normal subgroup of the symmetric group $S_n$:  the alternating group is not a reflection group (but it is of index 2 in $S_n$).

\section{Normal subgroups in the hyperoctahedral group}

The normal subgroups of the group $G$ of type $\B_n$ (and other finite and affine reflection groups) are described in Maxwell~\cite{maxwell1998normal}.
For instance, the subgroup of type $\A_n$ (the symmetric group $S_{n+1}$) and the subgroup $\Z_2^n$ are both normal in $G$, and have quotients $G/N\cong\Z_2$ and $S_3$ respectively.

In the case $n=3$, one composition series of $G$ is as follows:
\begin{center}
\begin{tikzpicture}
\draw (0,0) node (G) {$G$};
\draw (2,0) node (S4) {$S_4$};
\draw (4,0) node (A4) {$A_4$};
\draw (6,0) node (Z2Z2) {$\Z_2^2$};
\draw (8,0) node (Z2) {$\Z_2$};
\draw (10,0) node (1) {$1$.};
\draw[->,>=latex] (G)--(S4);
\draw[->,>=latex] (S4)--(A4);
\draw[->,>=latex] (A4)--(Z2Z2);
\draw[->,>=latex] (Z2Z2)--(Z2);
\draw[->,>=latex] (Z2)--(1);
\footnotesize
\draw (1,.3)node {$\Z_2$};
\draw (3,.3)node {$\Z_2$};
\draw (5,.3)node {$\Z_3$};
\draw (7,.3)node {$\Z_2$};
\draw (9,.3)node {$\Z_2$};
\end{tikzpicture}
\end{center}
Here $S_4$ is the symmetric group on 4 letters and $A_4$ is the alternating group on 4 letters (the group of even permutations).  The labels on the arows indicate the composition factors, so that for instance $S_4\lhd G$ and $G/S_4\cong\Z_2$.  The composition factors of a group are unique up to isomorphism and order in the series, by the Jordan-H\"older Theorem.   However there are normal subgroups that do not have simple factors and so are not featured in the composition series.  For instance, $\Z_2^3\lhd G$ and $G/\Z_2^3\cong S_3$.

In this section we develop a detailed example for the case $n=3$ in relation to these two normal subgroups ($S_4$ and $\Z_2^3$), including deriving the partial orders resulting from the $G$-majorization described above.

\subsection{The normal subgroup of type $\A_3$}\label{subsec:A3}

The normal subgroup $N$ of type $\A_3$ (the symmetric group $S_4$) is generated by the elements $\{s_1,s_2,s_3s_2s_3\}$
and has Dynkin diagram as shown in Figure~\ref{fig:A3.dynkin}.

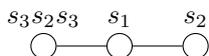
\begin{figure}[ht!]
   \begin{center}\footnotesize
   \begin{tikzpicture}%
   \draw (0,0)node[draw,circle](s1){};   \draw(0,.35)node{$s_3s_2s_3$}; %
   \draw (1,0)node[draw,circle](s2){};   \draw(1,.35)node{$s_1$};
   \draw (2,0)node[draw,circle](s3){};   \draw(2,.35)node{$s_2$};
   \draw (s1)--(s2) (s2)--(s3);
   \end{tikzpicture}
   \end{center}
\caption{Dynkin diagram for the normal subgroup of type $\A_3$ in the Coxeter group of type $\B_3$.}
\label{fig:A3.dynkin}
\end{figure}

The representations are given by the matrices $M_1$ and $M_2$ as for $G$ (given in Section~\ref{sec:eg.Bn}) but with $M_3$ replaced by
$$
  M'_3 = M_3M_2M_3 =  \left(
    \begin{array}{rrr}
      1 & 0 & 0 \\
      0 & 0 & -1 \\
      0 & -1 &  0 \\
    \end{array}
  \right).
$$
The relations between $M_1$, $M_2$ and $M_3'$ indicated by the Dynkin diagram are easily checked.
The $2$-dimensional supporting hyperplanes of $\C_N$ are given by
$$x_1-x_2=0,\;\;x_2-x_3 =0,\;\;\; x_2+x_3=0,$$
the fundamental cone is
$$x_1-x_2 \geq 0,\;\;x_2-x_3  \geq 0,\;\;\; x_2+x_3 \geq 0,$$
$$
  A =  \left(
    \begin{array}{rrr}
      1 & -1 & 0 \\
      0 & 1& -1 \\
      0 & 1 &  1 \\
    \end{array}
  \right).
$$
The representative of $x\in\X$ in the cone is $\x=(|x_{[1]}|,|x_{[2]}|,x_{[3]})$ with
\[|x_{[1]}|\ge|x_{[2]}|\ge x_{[3]}.\]
Now
$$
  (A^T)^{-1} =  \left(
    \begin{array}{rrr}
      1 & 0 & 0 \\
      \frac{1}{2} &  \frac{1}{2}&  -\frac{1}{2} \\
       \frac{1}{2} &  \frac{1}{2} &   \frac{1}{2} \\
    \end{array}
  \right).
$$
and from the rows of $(A^T)^{-1}$ we have that $y \prec_Nx$ becomes
\begin{align*}
|y_{[1]}| & \leq  |x_{[1]}| \\
|y_{[1]}| + |y_{[2]}| -y_{[3]}     & \leq |x_{[1]}| + |x_{[2]}|  -x_{[3]}\\
|y_{[1]}| + |y_{[2]}|  +y_{[3]} & \leq |x_{[1]}| + |x_{[2]}|  +x_{[3]}
\end{align*}

The subgroup $G/N$ is isomorphic to $\Z_2$, and following the discussion in Section~\ref{sec:quotients} we are free, up to isomorphism, to select $\phi$ consistent with this quotient operation.
There are various options. We can make it {\em dependent} on the selection of generators
for $G$ or $N$. For example, we could take the reflection in $x_3 = 0$ as the non-identity group element of $G/N$.  For this the additional order is
 $$|y_3| \leq |x_3|.$$
But this choice seems somewhat arbitrary, we could have used $x_1$ or $x_2$, but in any such cases there would also be a preferred ``direction". We
prefer the interesting case where the reflection generating $\Z_2$ is through $\{x: \sum_i x_i =0\}$ which would lead to
$$ \left|\sum_{i=1}^3 y_i\right| \leq \left|\sum_{i=1}^3 x_i\right|.$$

\subsection{The normal subgroup $\Z_2^3$}\label{subsec:Z2}

The normal subgroup $N\cong\Z_2^3$ can be generated by the commuting reflections $\{s_1s_2s_3s_2s_1, s_2s_3s_2, s_3\}$.  These are sign changes in the first, second and third coordinates respectively.  Because they commute with each other, they correspond to the rather uninteresting disconnected Dynkin diagram shown in Figure~\ref{fig:Z2^3.dynkin}.
\begin{figure}[ht!]
   \begin{center}\footnotesize
   \begin{tikzpicture}%
   \draw (0,0)node[draw,circle](s1){};   \draw(0,.35)node{$s_1s_2s_3s_2s_1$}; %
   \draw (2,0)node[draw,circle](s2){};   \draw(2,.35)node{$s_2s_3s_2$};
   \draw (4,0)node[draw,circle](s3){};   \draw(4,.35)node{$s_3$};
   \end{tikzpicture}
   \end{center}
\caption{Dynkin diagram for the normal subgroup $\Z_2^3$ of the group of type $\B_3$.}
\label{fig:Z2^3.dynkin}
\end{figure}
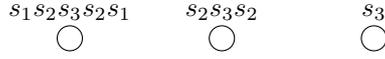

This abelian subgroup is the kernel of the map $\phi:G\to G$ that sends $s_1\mapsto s_1$, $s_2\mapsto s_2$ and $s_3\mapsto 1$.  Then $G/N\cong\text{im}\phi=\langle s_1,s_2\rangle\cong S_3$.  The reflecting hyperplanes for $N$ are simply the 2-dimensional planes orthogonal to the coordinate axes, given by $x_1=0$, $x_2=0$ and $x_3=0$.
The fundamental cone is then the positive octant of $\R^3$ given by $x_1\ge 0,\ x_2\ge 0,\ x_3\ge 0$.  For any point $x=(x_1,x_2,x_3)\in\R^3$ its representative in the cone is simply $\x=(|x_1|,|x_2|,|x_3|)$, and for any other $y\in\R^3$ we have the corresponding order $y\prec_N x$ given by the inequalities $|y_1|\le |x_1|$, $|y_2|\le |x_2|$ and $|y_3|\le |x_3|$.

The reflecting hyperplanes for the quotient $G/N$ are given by $x_1-x_2=0$ and $x_2-x_3=0$, so that this fundamental cone is $x_1\ge x_2\ge x_3$.  Note that this cone is not essential and in particular is not contained in any of the octants of the space defined by the coordinate axes.  Let $x=(x_{[1]},x_{[2]},x_{[3]})$ be given by ordering the coordinates so that $x_{[1]}\ge x_{[2]}\ge x_{[3]}$ and we have the lower weak 
majorization discussed above.

\subsection{Inequalities for the group of type $\B_n$ (and $\D_n$)}\label{Bn}

The inequalities in both the previous subsections extend in a straightforward manner to the general case when $G$ is of type $\B_n$.
The ordering for the group $G$ is given by $y\prec_G x$ if and only if
$$
\sum_{i=1}^j  y_{[i]} \leq \sum_{i=1}^j  x_{[i]}, \;j=1,\ldots,n. $$

The $n=3$ example of a subgroup of type $\A_3$ in the group of type $\B_3$ does not generalize to a subgroup of type $\A_n$ but rather to one of type $\D_n$.  This group has Dynkin diagram as shown in Figure~\ref{fig:Dn.dynkin}.

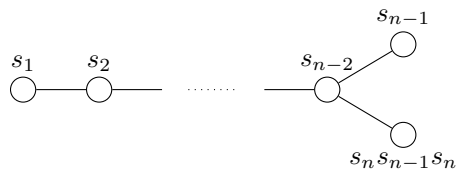
\begin{figure}[ht!]
   \begin{center}\footnotesize
   \begin{tikzpicture}%
   \draw (0,0)node[draw,circle](s1){};   \draw(0,.35)node{$s_1$}; %
   \draw (1,0)node[draw,circle](s2){};   \draw(1,.35)node{$s_2$};
   \draw (2,0)node[circle](s3){};   %
   \draw (3,0)node[circle](sn-3){};   %
   \draw (4,0)node[draw,circle](sn-2){};   \draw(4,.35)node{$s_{n-2}$};
   \draw (5,.6)node[draw,circle](sn-1){};   \draw(5,.95)node{$s_{n-1}$};
   \draw (5,-.6)node[draw,circle](sn){};   \draw(5,-.95)node{$s_ns_{n-1}s_n$};
   \draw (s1)--(s2) (s2)--(s3) (sn-3)--(sn-2)--(sn-1) (sn-2)--(sn);
   \draw[dotted] (s3)--(sn-3);
   \end{tikzpicture}
   \end{center}
\caption{Dynkin diagram of the type $\D_n$ Coxeter group, with generators showing its embedding as a normal subgroup of the group of type $\B_n$.}
\label{fig:Dn.dynkin}
\end{figure}

When $n=3$ this diagram reduces to the three nodes on the right hand side, and hence the isomorphism with the group of type $\A_3$ in that small case (see Figure~\ref{fig:A3.dynkin}).  The ordering derived from the normal subgroup in the general type $\B$ case then gives us a set of ``type $\D$" inequalities following from those  we have already obtained.
For  $N\lhd G$  of type $\D_n$ the quotient is $G/N\cong \Z_2$, and the order $y\prec_N x$ is given by the inequalities:
\begin{align*}
\sum_{i=1}^j |y_{[i]}| &\leq \sum_{i=1}^j |x_{[i]}|,\quad \text{for }j=1,\ldots,n-2,\\
\sum_{i=1}^{n-1}|y_{[i]}|  -y_{[n]}  &\leq \sum_{i=1}^{n-1} |x_{[i]}| -x_{[n]},\\
\sum_{i=1}^{n-1}|y_{[i]}|  +y_{[n]} &\leq \sum_{i=1}^{n-1} |x_{[i]}|+ x_{[n]}.
\end{align*}
For any $n$ the  subgroup $G/N$ is $\Z_2$, and for the appropriate choice of generator gives our preferred version of the inequality:
$$\left| \sum_{i=1}^n y_i\right| \leq \left|\sum_{i=1}^n x_i \right|$$
Finally, when $N=\Z_2^n\lhd G$, the order $y\prec_N x$ is given by the inequalities
\[
|y_1|\le |x_1|,\quad
|y_2|\le |x_2|,\quad
\cdots,\quad
|y_n|\le |x_n|
\]
and $G/N$ is the symmetric group {$S_n$} applied to $X$ giving the inessential (lower weak) version of majorization.

\section{Order-preserving functions}
A major motivation for the study of majorization is to state inequalities for functions
of interest in different fields. Formally, this means considering order-preserving functions.
\begin{defn}
An order preserving function associated with a $G$-majorization is a function $f$ such
that
$$y \prec_G x \; \implies f(y) \leq f(x)$$
\end{defn}
We label the set of all such order preserving functions $\mathcal F_G$. If $H$ is a proper subgroup of $G$ then $\mathcal F_G \subset \mathcal F_H$.  It is also clear, since $x \prec_G gx$
and $gx \prec_G x$ that any $G$-order preserving function $f$ is $G$-invariant: $f(x) = f(gx),\;\;\mbox{for all} \;\; g \in G$.

Now, as above, consider a non-trivial normal subgroup $N$ and the quotient subgroup $H \cong G/N$ (so that $G$ is an extension of $N$ by $H$). For the latter we adopt one
representation given by a choice of the homomorphism $\phi$. We have, applying the subgroup property twice,
\begin{equation}\label{subset}
\mathcal F_G \; \subset \; \mathcal F_N \cap \mathcal F_H.
\end{equation}

While we explain below that the reverse inclusion may not hold, it is nevertheless easily shown that a function that is order-preserving with respect to both $N$ and $H$ is also $G$-invariant.

\begin{lem}\label{lem:f.G.inv}
If $f \in \mathcal F_N \cap \mathcal F_H$ then $f$ is $G$-invariant.
\end{lem}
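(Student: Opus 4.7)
The plan is to exploit two facts already established in the excerpt: first, that any $\mathcal F_N$-function is $N$-invariant and any $\mathcal F_H$-function is $H$-invariant (this is the remark just above Eq.~\eqref{subset}, applied to $N$ and to $H$ separately, using that $x \prec_N nx \prec_N x$ and $x \prec_H hx \prec_H x$); and second, that because $G$ is an extension of $N$ by $H$ with $H \le G$, the extension splits, so $G = NH$ with $N \cap H = \{e\}$ and every element $g \in G$ has a (unique) decomposition $g = nh$ with $n \in N$ and $h \in H$.

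Given these two ingredients, the argument is essentially a one-liner. First, I would record explicitly that $f(nx)=f(x)$ for all $n\in N$ and $f(hx)=f(x)$ for all $h\in H$, citing the invariance remark. Next, for an arbitrary $g \in G$, write $g = nh$ using the split extension. Then
\begin{equation*}
f(gx) \;=\; f(n(hx)) \;=\; f(hx) \;=\; f(x),
\end{equation*}
where the second equality applies $N$-invariance to the vector $hx$, and the third applies $H$-invariance to $x$. Since $g$ was arbitrary, $f$ is $G$-invariant.

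The only subtlety, and the point worth stating carefully, is the justification of the decomposition $g = nh$. This is not a generic group extension fact but follows from our standing hypothesis that $H \le G$ is an actual subgroup isomorphic to $G/N$: the composite $H \hookrightarrow G \twoheadrightarrow G/N$ is the given isomorphism, so the extension is split and hence $G = N \rtimes H$ as sets. I would note this briefly to make the proof self-contained, but there is no real obstacle beyond bookkeeping. No use of the reflection-group structure, the cone ordering, or $\phi$ itself is required beyond what was already built into the definition of $\mathcal F_H$.
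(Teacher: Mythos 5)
Your proof is correct and follows essentially the same route as the paper's: decompose $g=nh$ with $n\in N$, $h\in H$, then apply $N$-invariance and $H$-invariance in turn (both of which follow from the observation that order-preserving functions are invariant). If anything your version is slightly more careful than the paper's, which writes $g=nh$ but then evaluates $f(hnx)$, and which does not comment on why the decomposition $G=NH$ is available; your remark that this rests on $H$ being a genuine complement of $N$ (not merely abstractly isomorphic to $G/N$) is a worthwhile clarification.
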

\begin{proof}
If $g \in G$ then $g$ can be written $g=nh$ for some $n \in N$ and $h \in H$. Then, using the fact that
$\prec_N$ and $\prec_H$ are order preserving functions, respectively, $N$- and $H$-invariant, we have:
\begin{align*}
  f(gx) &  =  f(hnx) \\
   & =  f(nx) \\
  & =   f(x).
\end{align*}
\end{proof}

The subset inclusion in \eqref{subset} may be strict. We see this as follows. As before $ y \prec_G x $ is equivalent to
$ \x,\y \in \C_G \;\; \mbox{and} \;\; \x-\y \in \C_G^*,$
where $\x,\y$ are representatives of $x,y$, respectively, in $\C_G$. But by Theorem \ref{fund}, and assuming
we have closed essential cones we have:
\begin{align*}
\C^*_G & = (\C_N \cap \C_H)^* \\
       & =  \C_N^* + \C_H^* \\
       & =\mbox{conv}(\C_N^* \cup \C_H^*),
\end{align*}
where the ``+" is the Minkowski sum and ``conv" is the convex hull. Equality in (\ref{subset}) holds if and only if
$$y \prec_G x \implies \{y \prec_H x \} \vee \{y \prec_N x \}$$
By the above this holds if and only if
\begin{equation}\label{equalcase}
\mbox{conv}(\C_N^* \cup \C_H^*) = \C_N^* \cup \C_H^*,
\end{equation}
which, in turn,  holds if and only if $\C_N^* \cup \C_H^*$ is convex. Translating this to order-preserving functions,
equality in \eqref{subset} holds if and only if this convexity holds. One way of seeing when condition \eqref{equalcase} breaks down is that the set of inequalities which give $y \prec_G x$
is simply not obtained by listing the inequalities from $\prec_N$ and $\prec_H$. If $\C_N^* \cup \C_H^*$ is strictly contained in
$\mbox{conv}(\C_N^* \cup \C_H^*)$ there are more pairs $x,y$ to compare and fewer functions $f$ satisfying $f(y) \leq f(x)$. This is the case, for example, in Section~\ref{subsec:A3}.

It should be mentioned that \cite{niezgoda2001norm,niezgoda2002structure} note the importance of
strict inclusion in their version of Equation~\eqref{equalcase}, and refer to the
relationship of their subgroups and cones in this case as being {\em effective}. 

\subsection{Root systems}\label{subsec: root}
We need to collect some basic material about root systems to understand futher interplay between the inequalities defining the majorization for $G,N$ and $H=G/N$, the trio of this paper. Let us return to the example of the groups of type $\B_3$
in Section~\ref{sec:eg.Bn}.  We saw that the fundamental cone $\C$ is defined by the hyperplanes $\{x_1-x_2=0 , x_2-x_3=0, x_3=0\}$.  These are sometimes referred to as the walls of $\C$. The dual cone is generated by the vectors orthogonal to these hyperplanes, namely
$$a_1=\{ (1,-1,0), \; a_2=(0,1,-1),\;a_3=(0,0,1).$$
Writing $e_1,e_1,e_3$ for the unit vectors $(1,0,0),(01,0),(0,0,1)$ respectively,  the generators can be written as
$e_1-e_2, e_2-e_3, e_3.$
These are referred to as the {\em fundamental root system} of the group. For type $\B_n$ the system is
$$\{e_1-e_2, e_2-e_3, \ldots, e_{n-1}-e_n,e_n\},$$
Since the generators of the dual cone applied to representative vectors in $\C$ define the majorization,
and we can take these these generators as the fundamental roots, we can study $\prec_G, \prec_N$ and $\prec_H$ via their root systems.

From Theorem \ref{fund} it must be the case that hyperplanes defining  the walls of $\C_G$ are comprised of walls from $\C_N$ and $\C_H$, and hence it must be the case that the fundamental roots of $G$ must comprise certain
roots from $N$ and $H$.  We see this clearly from Subsection~\ref{subsec:A3}. There we see that the roots of the group of type $\A_3$ are
$$\{e_1-e_2,e_2-e_3, e_1+e_2\},$$
and with our selection of $x_3=0$ as the wall of $\A_3$ we see that
$$\{e_1-e_2, e_2-e_3, e_3\} \subset \left( \{e_1-e_2,e_2-e_3, e_1+e_2\} \cup \{e_3\}\right),$$
confirming our proposition. For the example in Subsection~\ref{subsec:Z2}, we have
$$\{e_1-e_2, e_2-e_3, e_3\} \subset \left(\{e_1-e_2,e_2-e_3, e_1+e_2\} \cup \{e_1,e_2,e_3\}\right).$$

These cases provide counterexamples to confirm the strict inclusion in \eqref{subset}.
Thus in the first case above one can easily check that $e_1+2e_3$ lies in $\C_G$ but in neither $\C^*_N$ nor $\C^*_H$.

\subsection{Differential conditions for $G$ order preserving functions}
Root systems are the key to the differential condition for $\prec_G$ preserving functions. Thus, take $x,y$ with representatives $\x, \y \in \C_G$ with $\x= \y + \epsilon$. Then $y \prec_G x$ if and only if $\epsilon = \x - \y \in \C^*$. Let $f$ be a continuously differentiable $\prec_G$ preserving function. Write
$$
f(\x) = f(\y) + \langle \frac{\partial f}{\partial x}, \epsilon\rangle ||\epsilon||+ o(||\epsilon||),
$$
where $\frac{\partial f}{\partial x} = \left( \frac{ \partial f}{\partial x_1}, \ldots, \frac{\partial f}{\partial x_n} \right)^T$, the gradient.
Letting $||\epsilon|| \rightarrow 0$ we see that a necessary and sufficient condition for $f(\y) \leq f(\x)$ is that
$$ \langle \frac{\partial f}{\partial x}, a_i\rangle\geq 0,$$
for all fundamental roots $a_i$.

For the case of type $\B_n$ above, the conditions are (for the representatives):
$$ \frac{\partial f}{\partial x_{i}} - \frac{\partial f}{\partial x_{i+1}} \geq 0, \;(i=1,\ldots n-1),\;\;\frac{\partial f}{\partial x_n} \geq 0$$
on the cone $\left\{|x_1| \geq |x_2| \ldots \geq |x_n| \geq 0 \right\}$.  The invariant polynomial ring (see eg \cite[Section 16]{kane2001reflection}) has basis
\begin{align*}
g_k  & = \sum_{1\leq  i_1 <\dots< i_k \leq n} x^2_{i_1} \cdots x^2_{i_k},\quad\text{for }k=1,\ldots,n,\text{ and} \\
 h &  = x_1 \cdots x_n.
\end{align*}
As an example consider invariants of the form
$$f = a g_1+b h.$$
A little analysis shows that $f$ is $G$ order preserving with  $G$ of type $\D_n$ (for all $x \in \R^n$) if and only if $2a \geq b \geq 0$.  

In the essential case when the $A$-matrix is invertible we have a concise matrix expression for a $G$ invariant $f$ to be $\prec_G$  preserving:
\begin{enumerate}
\item $\prec_G$ is equivalent to $(A^{-1})^T y \leq (A^{-1})^T x \leq 0$ for representatives  $Ax \geq 0,\; Ay \geq 0$.
\item $A \; \frac{\partial f}{\partial x} \geq 0$ for representatives $Ax \geq 0, \; Ay \geq 0$ .
\end{enumerate}

Since, the group of type $\D_n$ is a subgroup of that of type $\B_n$, we have for their order preserving
functions $F_{\B_n} \subset F_{\D_n}$. To confirm the inclusion is strict we give an example
function in $F_{\D_n}$ which is not in $F_{\B_n}$. Consider the function for $n=4$ given by:
$$f= \frac{1}{4} (x_1^2+x_2^2+x_3^2+x_4^2)^2 - |x_1x_2x_3x_4|$$
This function is invariant under both types $\B_n$ and $\D_n$. The first three derivative tests
are the same for $\prec_{B_n}$ and $\prec_{D_n}$. For $\D_n$ we  confirm that
$$\frac{\partial f}{\partial x_3}+\frac{\partial f}{\partial x_4} = (x_3+x_4)(x_1^2+x_2^2+x_3^2+x_4^2- x_1x_2) \geq 0$$
holds on $\C_{\D_n} = \{x:x_1\geq x_2 \geq x_3 \geq x_4 \geq,\; x_3+x_4 \geq 0\}$. For $\B_n$ we should have
$$\frac{\partial f}{\partial x_4} = (x_1^2+x_2^2+x_3^2+x_4^2)x_4-x_1x_2x_3 \geq 0,$$
on $\C_{\B_n} = \{x:x_1\geq x_2 \geq x_3 \geq x_4 \geq 0\}$. But this fails, for example at $x=(1,1,1,\frac{1}{4})$.

\medskip

\section*{Acknowledgements}
ARF was supported by Australian Research Council Future Fellowship FT100100898.
HPW was supported by a Fellowship from the Leverhulme Trust.

\bibliographystyle{plain}
%\bibliography{group-majorization}

\end{document}